\documentclass[11pt]{article}

\usepackage[a4paper,margin=1in]{geometry}
\usepackage{amsmath,amssymb,amsthm,mathtools}
\usepackage{microtype}
\usepackage{enumitem}
\usepackage{booktabs}
\usepackage{graphicx}
\usepackage{aliascnt}
\usepackage{xcolor}
\usepackage[colorlinks=true,linkcolor=black,citecolor=blue!55!black,urlcolor=blue!55!black]{hyperref}
\usepackage[nameinlink,capitalise,noabbrev]{cleveref}
\usepackage{tikz}
\usetikzlibrary{arrows.meta,calc}

\newtheorem{theorem}{Theorem}[section]
\newaliascnt{lemma}{theorem}
\newtheorem{lemma}[lemma]{Lemma}
\aliascntresetthe{lemma}
\newaliascnt{proposition}{theorem}

\aliascntresetthe{proposition}
\newaliascnt{corollary}{theorem}
\newtheorem{corollary}[corollary]{Corollary}
\aliascntresetthe{corollary}
\newaliascnt{claim}{theorem}

\aliascntresetthe{claim}
\newaliascnt{problem}{theorem}
\newtheorem{problem}[problem]{Problem}
\aliascntresetthe{problem}
\theoremstyle{definition}
\newaliascnt{definition}{theorem}

\aliascntresetthe{definition}
\newaliascnt{construction}{theorem}

\aliascntresetthe{construction}
\theoremstyle{remark}

\newcommand{\N}{{\mathbb{N}}}

\newcommand{\R}{\mathbb{R}}

\renewcommand{\epsilon}{\varepsilon}

\renewcommand{\phi}{\varphi}
\renewcommand{\leq}{\leqslant}

\numberwithin{equation}{section}
\newcommand{\cL}{\mathcal L}

\newcommand{\cW}{\mathcal W}
\newcommand{\cWz}{\mathcal W_0}
\newcommand{\Ran}{\operatorname{Ran}}
\newcommand{\rank}{\operatorname{rank}}
\newcommand{\Span}{\operatorname{span}}

\newcommand{\one}{\mathbf 1}
\newcommand{\ip}[2]{\left\langle #1,#2\right\rangle}

\title{Infinitesimal finite forcibility and step kernels}
\author{Xichao Shu\thanks{Institute of Mathematics, Leipzig University, Leipzig, Germany. Email: \texttt{xichao.shu@uni-leipzig.de}.}\and 
Jing Yu\thanks{Shanghai Center for Mathematical Sciences, Fudan University, Shanghai, China. 
E-mail: \texttt{\{jyu@fudan.edu.cn, jczhang24@m.fudan.edu.cn\} }. } \and Junchi Zhang\footnotemark[2]}

\date{}

\begin{document}
\maketitle

\begingroup
\renewcommand{\thefootnote}{\fnsymbol{footnote}}
\footnotetext[3]{%
Xichao Shu was supported by the Alexander von Humboldt Foundation
in the framework of the Alexander von Humboldt Professorship of
Daniel Kr\'{a}l', endowed by the German Federal Ministry of Education
and Research.
Jing Yu was partially supported by the National Natural Science
Foundation of China under grants 12371343 and 12525110 (PI: Hehui Wu).
}
\endgroup
\begin{abstract}
We characterize infinitesimal finite forcibility for bounded symmetric real kernels. We prove that the graph-density gradients at a kernel span a finite-dimensional space if and only if the kernel is a step kernel. Combined with known finite-forcing results for step kernels, this gives a positive answer to a question of Lovász and Szegedy on whether every infinitesimally finitely forcible kernel is finitely forcible. The proof combines spectral methods with a compression argument based on book graphs.
\end{abstract}



\section{Introduction}

A central theme in combinatorial limit theory is to represent large
finite discrete structures by analytic limit objects while retaining
the asymptotic densities of fixed finite patterns.  For dense graphs,
the corresponding limit objects are graphons, namely symmetric
measurable functions $W\colon[0,1]^2\to[0,1]$; see
\cite{LS06,Lovasz12}.  More generally, we call a bounded measurable symmetric function
$W\colon[0,1]^2\to\R$ a \emph{kernel}. We write $\cW$ for the real vector space of kernels and $\cWz\subseteq\cW$ for the space of graphons. 

For a finite simple graph $F$ and $W\in\cW$, the
\emph{homomorphism density} of $F$ in $W$ is
\[
  t(F,W)
  :=
  \int_{[0,1]^{V(F)}}
  \prod_{\{i,j\}\in E(F)} W(x_i,x_j)
  \prod_{i\in V(F)} dx_i.
\]
For graphons, these quantities encode the asymptotic densities of
finite patterns in convergent dense graph sequences
\cite{LS06,Lovasz12}.  We say that two kernels $U,W\in\cW$ are
\emph{weakly isomorphic} if $t(F,U)=t(F,W)$ for every finite
simple graph $F$. Thus weakly isomorphic kernels are
indistinguishable by all finite graph densities.  In particular,
for graphons, a measure-preserving relabelling of the underlying
space $[0,1]$ leaves every homomorphism density unchanged and hence
preserves the weak isomorphism class
\cite{BCL10,Lovasz12}.

This naturally leads to a finite determination problem: when can
the weak isomorphism class of a kernel be determined by only
finitely many graph densities? Lovász and Szegedy introduced the notion of \emph{finite
forcibility} to study this question~\cite{LS11}. We say that $W\in\cW$ is \emph{finitely forcible in $\cW$} if there are finite simple graphs $F_1,\ldots,F_m$ such that every $U\in\cW$ satisfying
\[
        t(F_i,U)=t(F_i,W)
        \qquad\text{for all }i\in[m]
\]
is weakly isomorphic to $W$. Requiring $U,W\in\cWz$ gives the usual notion of finite forcibility for graphons.

A classical example comes from quasirandom graphs. The theorem of
Chung, Graham, and Wilson~\cite{CGW89} implies that the constant
graphon with value $p$ is determined by its edge density and
$4$-cycle density.  More generally, step graphons form a natural
class of finitely forcible objects.  A kernel $W$ is a
\emph{step kernel} if there is a finite measurable partition
\[
[0,1]=S_1\sqcup\cdots\sqcup S_q
\]
such that $W$ is constant
almost everywhere on every $S_i\times S_j$.  Step graphons are
finitely forcible by a theorem of Lovász and Sós~\cite{LS08}; see also \cite[Theorem~5.33]{Lovasz12}.
The corresponding result for bounded real-valued step kernels was
proved by Grzesik, Král, and Pikhurko~\cite[Theorem~10]{GKP24}. The constant case is immediate:
a constant kernel is forced by $K_2$ and $C_4$.

Finite forcibility, however, does not by itself imply step-like or otherwise simple structure. Lov\'asz and Szegedy constructed finitely forcible non-step graphons \cite{LS11}. Subsequent work developed the subject in connection with permutons, compactness, and weak regularity \cite{GGKK15,GKV19,CKKN18}, and produced finitely forcible graphons with infinite-dimensional typical-vertex space \cite{GKK19}. In fact, every graphon occurs as a subgraphon of a finitely forcible graphon \cite{CKM18}, and it can even occupy an arbitrarily large proportion of one \cite{KLNS20}. Thus finite forcibility alone places very little restriction on the structural complexity of a graphon.

Lov\'asz and Szegedy proposed a different, infinitesimal finiteness condition, obtained by linearizing all graph-density functionals at a fixed kernel $W$. To formulate this condition, it is convenient to work in $\cW$ rather than only in $\cWz$, since $W$ can then be perturbed in an arbitrary bounded symmetric direction. For every finite simple graph $F$ and $W,K\in\cW$, the directional derivative of the polynomial functional $t(F,\cdot)$ at $W$ can be written as
\[
        \left.
        \frac{d}{d\varepsilon}t(F,W+\varepsilon K)
        \right|_{\varepsilon=0}
        =
        \int_{[0,1]^2}K(x,y)\nabla F(W)(x,y)\,dx\,dy.
\]
The symmetric kernel $\nabla F(W)$ is the \emph{graph-density gradient}; its explicit edge-deletion formula is given in \cref{sec:preliminaries}. Define
\[
        \cL(W)
        :=
        \Span\{\nabla F(W):F\text{ is a finite simple graph}\}.
\]
Following \cite[Remark~7.8]{LS11}, we call $W$ \emph{infinitesimally finitely forcible} if $\dim\cL(W)<\infty$. Concretely, this means that there are finitely many graphs $G_1,\ldots,G_s$ such that, for every simple graph $F$,  $\nabla F(W)$ is a linear combination of $\nabla G_1(W),\ldots,\nabla G_s(W)$.

The two notions of finiteness concern different mathematical data. Finite forcibility asks whether finitely many exact, nonlinear density equations determine $W$ globally up to weak isomorphism. Infinitesimal finite forcibility instead linearises \emph{all} graph-density functionals at one kernel and asks whether their differentials span a finite-dimensional space.
Lov\'asz and Szegedy proved that, among finitely forcible graphons, the latter condition is equivalent to being a step graphon \cite[Proposition~7.7 and Remark~7.8]{LS11}. Their finitely forcible non-step examples therefore show that finite forcibility does not imply infinitesimal finite forcibility. They asked whether the converse holds:

\begin{problem}[{\cite[Question~8]{LS11}}]\label{pro:main}
Is every infinitesimally finitely forcible graphon also finitely
forcible, and hence a step graphon?
\end{problem}

We answer this question affirmatively. In fact, we remove the finite-forcibility assumption from the preceding characterization and prove the stronger statement for all bounded real-valued kernels.

\begin{theorem}\label{thm:main}
Let $W\in\cW$. Then
\[
        \dim\cL(W)<\infty
\] if and only if it is a step kernel.
\end{theorem}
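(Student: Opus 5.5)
For the easy direction, I will assume $W$ is a step kernel with partition $S_1\sqcup\cdots\sqcup S_q$. The edge-deletion formula expresses $\nabla F(W)(x,y)$ as a sum over edges $ij\in E(F)$ of integrals over all vertices other than $i,j$. In each integral, the variables $x_i=x$ and $x_j=y$ enter only through their classes. Hence every $\nabla F(W)$ is constant on each $S_a\times S_b$, which gives $\cL(W)\subseteq$ the space of symmetric step functions on this partition, of dimension at most $\binom{q+1}{2}$.

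For the hard direction, I will assume $\dim\cL(W)<\infty$ and show that $W$ is a step kernel. I plan to use two families of graphs.

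First, \emph{paths}. Let $P_k$ be the path with $k$ edges. Deleting an edge splits $P_k$ into two paths with roots at $x$ and $y$, so
\[
\nabla P_k(W)(x,y)=\sum_{a+b=k-1} (T_W^a\one)(x)\,(T_W^b\one)(y),
\]
where $T_W$ is the integral operator of $W$. Finite dimensionality of these functions of two variables should force the functions $T_W^a\one$, $a\ge0$, to span a finite-dimensional space $V$. I would take a suitable linear combination and note that a rank-type argument on $x\mapsto$ functions of $y$ recovers the factors. Then $T_W$ restricted to the cyclic subspace generated by $\one$ is finite-dimensional.

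Second, I want to use richer rooted graphs to show that \emph{all} rooted homomorphism functions $t_x(F,W)$, i.e.\ the $1$-labelled densities, span a finite-dimensional space. A gradient of $F$ is a sum of products $f_{ij}(x,y)$ of $2$-labelled densities. To isolate single-variable rooted functions, I would use \emph{book graphs}: glue many copies of a rooted graph $G$ along an edge, or attach a pendant structure, so that the gradient contains a term $g(x)g(y)W(x,y)^{m}$ or $t_x(G)^m$ with large multiplicity. Comparing growth in $m$ then lets a Vandermonde-type compression separate the terms. From $\dim\cL(W)<\infty$ I would deduce that the algebra generated by the rooted densities $t_x(G,W)$ is finite-dimensional. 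A finite-dimensional algebra of functions closed under multiplication consists of step functions on a finite partition $\mathcal P=\{S_1,\dots,S_q\}$ of $[0,1]$.

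Finally, I would show that $W$ itself is constant on each $S_a\times S_b$. The $2$-labelled densities such as $W(x,y)\,t_x(G)\,t_y(H)$ appear in gradients. In the limit, the rooted densities separate points exactly as $\mathcal P$ does, so two points in the same class have identical rooted homomorphism profiles. By the theory of the twin-free quotient (points with equal $t_x(F,W)$ for all rooted $F$ are twins up to measure-preserving maps), $W$ is weakly isomorphic to a kernel on a $q$-point space, hence to a step kernel. To get "is" rather than "weakly isomorphic", I would instead argue spectrally. Within each class the measures $W(x,\cdot)$ restricted to $S_b$ are determined by their moments against products of rooted densities. Finite dimensionality then forces $W(x,y)$ to be a function of the classes, using the $L^2$ density of the algebra generated by the functions $y\mapsto W(x,y)$ restricted to class pairs.

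The main obstacle is the middle step: extracting single-vertex rooted densities from gradients, which are two-variable objects and sums over edges. The book-graph compression must ensure that cross terms do not cancel, and I expect a careful choice of multiplicities plus a polynomial-interpolation argument to be the crux.
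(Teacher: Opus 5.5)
Your easy direction matches the paper, but the hard direction has two genuine gaps. First, you never show that $T_W$ has finite rank. The path gradients only make the cyclic subspace $\Span\{T_W^a\one:a\ge0\}$ finite-dimensional. That is automatic for every kernel of constant degree (e.g.\ $W(x,y)=f(x-y)$ with $f$ even and $1$-periodic, where it is just $\Span\{\one\}$), so it carries almost no information. The right family is cycles. Deleting an edge of $C_{m+1}$ leaves a path, so $\nabla C_{m+1}(W)$ is $(m+1)$ times the kernel of $T_W^m$. Linear dependence of $T_W^2,T_W^3,\dots$ then gives $p(T_W)=0$ for a nonzero polynomial $p$, hence finite rank and $W=\sum_{j=1}^r\lambda_je_j(x)e_j(y)$.

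Second, and more seriously, your middle step aims at the wrong target. Even if every rooted density $t_x(F,W)$ were a step function on a finite partition, $W$ need not be a step kernel. Take $W(x,y)=\tfrac12+\tfrac12\cos(2\pi(x-y))$, viewing $[0,1]$ as the circle. Rotation invariance makes every $t_x(F,W)$ independent of $x$, so your partition would have $q=1$. Yet $W$ is not constant, and not even weakly isomorphic to the constant $\tfrac12$: its eigenvalues are $\tfrac12,\tfrac14,\tfrac14$, so $t(C_4,W)=\tfrac1{16}+\tfrac2{256}\ne\tfrac1{16}$. Thus your claim that points with equal rooted profiles are twins is false. The sketched moment argument cannot repair it, since integrating $W(x,\cdot)$ against functions that are constant on each class only recovers the degrees of $x$ into the classes.

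What must be shown to take finitely many values are the spectral coordinates $u_j=\lambda_je_j$, which are not rooted densities. The paper gets this from the book graphs you mention, but via an exact cancellation rather than a growth-rate or Vandermonde separation.

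\begin{enumerate}[label=(\roman*)]
\item Let $C$ be the kernel of $T_W^2$. One computes $\nabla B_n(W)=C^{\odot n}+n(R_n+R_n^{\mathsf T})$, where $R_n(x,\cdot)=T_W\bigl(W(x,\cdot)C(x,\cdot)^{n-1}\bigr)\in\Ran T_W$.
\item Compressing by the projection $Q$ onto $(\Ran T_W)^\perp$ kills the $R_n$ terms. Hence the operators $A_n:=QT_{C^{\odot n}}Q$ span a finite-dimensional space.
\item Since $C=\sum_j u_j\otimes u_j$, the multinomial theorem gives $A_n=\sum_{|\alpha|=n}\binom{n}{\alpha}(Qu^\alpha)\otimes(Qu^\alpha)$. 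All coefficients are positive, so $\Ran A_n=Q\bigl(\Span\{u^\alpha:|\alpha|=n\}\bigr)$.
\item Hence every monomial $u^\alpha$ lies in the finite-dimensional space $\Span\{\one\}+\Ran T_W+M$, where $M$ is the sum of the ranges of a basis of $\Span\{A_n\}$.
\item Each $u_j$ then satisfies a nonzero polynomial and has finite essential range. The joint level sets of $(u_1,\dots,u_r)$ give the step partition.
\end{enumerate}

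Both the finite expansion of $C$ and the finite-dimensionality of $\Ran T_W$ are essential here, which is why the missing finite-rank step matters.
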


Combining \cref{thm:main} with the finite forcibility of step kernels gives the following direct answer to the question of Lov\'asz and Szegedy.

\begin{corollary}\label{cor:forcing}
Every infinitesimally finitely forcible kernel is finitely forcible in $\cW$. In particular, every infinitesimally finitely forcible graphon is finitely forcible.
\end{corollary}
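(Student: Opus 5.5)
The corollary follows by combining \cref{thm:main} with the known finite-forcing results for step kernels, so the plan is short. Let $W\in\cW$ be infinitesimally finitely forcible, that is, $\dim\cL(W)<\infty$.

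By the forward direction of \cref{thm:main}, $W$ is a step kernel: there is a finite measurable partition $[0,1]=S_1\sqcup\cdots\sqcup S_q$ such that $W$ is almost everywhere constant on each $S_i\times S_j$. For bounded real-valued step kernels, Grzesik, Kr\'al' and Pikhurko \cite[Theorem~10]{GKP24} prove finite forcibility in $\cW$. Concretely, there are finite simple graphs $F_1,\dots,F_m$ such that every $U\in\cW$ with $t(F_i,U)=t(F_i,W)$ for all $i\in[m]$ is weakly isomorphic to $W$. This is exactly the first assertion of the corollary.

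For the second assertion, let $W\in\cWz$ be an infinitesimally finitely forcible graphon. Then $W$ is in particular a kernel, so by \cref{thm:main} it is a step kernel with values in $[0,1]$, i.e.\ a step graphon. Two routes give finite forcibility among graphons.

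\begin{itemize}[leftmargin=*]
\item \emph{Via the Lov\'asz--S\'os theorem.} Step graphons are finitely forcible by \cite{LS08}; see also \cite[Theorem~5.33]{Lovasz12}.
\item \emph{By restriction from $\cW$.} The graphs $F_1,\dots,F_m$ from the previous paragraph force $W$ among all of $\cW$. Since $\cWz\subseteq\cW$, they force $W$ among graphons as well.
\end{itemize}

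There is no genuine obstacle at this stage: all the difficulty lies in the forward implication of \cref{thm:main}, which the corollary takes as given. The only point requiring care is that the forcing notion in \cite{GKP24} is the one for real-valued kernels with weak isomorphism defined via all $t(F,\cdot)$. This matches our definition of finite forcibility in $\cW$, so no translation between the two notions is needed.
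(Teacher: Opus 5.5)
Your proposal is correct and is the paper's argument: Theorem~\ref{thm:main} turns $\dim\cL(W)<\infty$ into the step property, and the cited finite-forcing results for step kernels \cite{GKP24} and step graphons \cite{LS08} finish the proof (your restriction-from-$\cW$ route for graphons is also valid). One cosmetic point: the paper uses ``forward direction'' for the easy implication (step $\Rightarrow$ $\dim\cL(W)<\infty$), so the implication you invoke is the one the paper proves as the converse.
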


The proof of \cref{thm:main} has two main ingredients. The easy direction follows because all graph-density gradients of a $q$-step kernel are constant on the same product partition and hence lie in a space of dimension at most $q(q+1)/2$. For the converse, cycle gradients show that the integral operator with kernel $W$ has finite rank. This is not sufficient by itself: for example, $W(x,y)=xy$ has rank one but is not a step kernel. The new ingredient is a compression identity for book graphs. It implies that all monomials in the finitely many spectral coordinates of $W$ lie in a common finite-dimensional subspace of $L^2[0,1]$. Hence each spectral coordinate satisfies a nonzero polynomial relation and has finite essential range; their joint level sets then give a finite step partition. Thus cycles force $T_W$ to have finite rank, while books force the resulting spectral coordinate functions to have finite essential range.

\cref{sec:preliminaries} gives the explicit gradient formula and fixes the operator notation. The proof of \cref{thm:main} is given in \cref{sec:proof}.

\section{Gradients and operator notation}\label{sec:preliminaries}
We give the explicit edge-deletion formula for the gradient introduced above and fix the functional-analytic notation used in the proof. All Hilbert spaces and inner products below are real.  We use
\[
 \ip{f}{g}:=\int_0^1 f(x)g(x)\,dx
\]
for the inner product on $L^2[0,1]$, and the analogous notation on
$L^2([0,1]^2)$.
For $m\ge1$, put $[m]:=\{1,\ldots,m\}$; we write
$\N:=\{1,2,\ldots\}$ and $\N_0:=\{0,1,2,\ldots\}$.

Let $H$ be a graph with two distinct vertices labelled $1$ and $2$.  Its \emph{two-labelled density} in $W$ is the function
\begin{equation}\label{eq:t2}
 t_2(H,W)(x,y)
 :=
 \int_{[0,1]^{V(H)\setminus\{1,2\}}}
 \prod_{\{i,j\}\in E(H)}W(z_i,z_j)
 \prod_{i\in V(H)\setminus\{1,2\}}dz_i,
\end{equation}
where $z_1=x$ and $z_2=y$.

For a finite simple graph $F$, let
\[
 \vec E(F):=
 \bigcup_{\{a,b\}\in E(F)}\{(a,b),(b,a)\}.
\]
For $(a,b)\in\vec E(F)$, let $F_{a,b}$ be obtained by deleting
$\{a,b\}$ and labelling $a$ by $1$ and $b$ by $2$.  Then the \emph{gradient} is given explicitly by
\begin{equation}\label{eq:gradient}
 \nabla F(W)
 :=
 \frac12\sum_{(a,b)\in\vec E(F)}t_2(F_{a,b},W).
\end{equation}
The two orientations make this kernel symmetric.  With this normalization, we have the first-variation formula of Lovász and Szegedy.

\begin{lemma}[Lovász--Szegedy {\cite[Lemma~7.5]{LS11}}]
\label{lem:first-variation}
Let $F$ be a finite simple graph and let $W,K\in\cW$.  Then
\[
 \left.\frac{d}{d\varepsilon}t(F,W+\varepsilon K)
 \right|_{\varepsilon=0}
 =
 \ip{K}{\nabla F(W)}_{L^2([0,1]^2)}.
\]
\end{lemma}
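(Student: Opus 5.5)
The plan is to expand $t(F,W+\varepsilon K)$ as a polynomial in $\varepsilon$ and read off its linear coefficient. By definition,
\[
 t(F,W+\varepsilon K)=\int_{[0,1]^{V(F)}}\prod_{\{i,j\}\in E(F)}\bigl(W(x_i,x_j)+\varepsilon K(x_i,x_j)\bigr)\prod_{i\in V(F)}dx_i .
\]
Since $W$ and $K$ are bounded and $[0,1]^{V(F)}$ has finite measure, we can multiply out the product over edges. Each of the resulting finitely many terms is integrable, so $t(F,W+\varepsilon K)$ is a polynomial in $\varepsilon$ of degree at most $|E(F)|$. Its derivative at $\varepsilon=0$ is therefore the coefficient of $\varepsilon^1$. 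This coefficient is
\[
 \sum_{\{a,b\}\in E(F)}\int_{[0,1]^{V(F)}} K(x_a,x_b)\prod_{\{i,j\}\in E(F)\setminus\{\{a,b\}\}}W(x_i,x_j)\prod_{i}dx_i .
\]

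Next, we fix an edge $\{a,b\}$ and one of its orientations $(a,b)$. By Fubini (all integrands are bounded), we integrate first over the variables $x_i$ with $i\notin\{a,b\}$. Setting $x=x_a$ and $y=x_b$, the inner integral is exactly $t_2(F_{a,b},W)(x,y)$ by \eqref{eq:t2}, because $F_{a,b}$ is $F$ with $\{a,b\}$ removed and $a,b$ labelled $1,2$. So the term for $\{a,b\}$ equals $\int K(x,y)\,t_2(F_{a,b},W)(x,y)\,dx\,dy$. If we instead use the orientation $(b,a)$, we get $\int K(y,x)\,t_2(F_{b,a},W)(y,x)$. Symmetry of $K$ and relabelling the variables show that this equals the same number, and indeed $t_2(F_{b,a},W)(x,y)=t_2(F_{a,b},W)(y,x)$. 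Hence each unordered edge contributes
\[
 \frac12\sum_{(a,b),(b,a)}\ip{K}{t_2(F_{a,b},W)} .
\]
Summing over $E(F)$ then gives $\ip{K}{\nabla F(W)}$ by \eqref{eq:gradient}.

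No step is a genuine obstacle. The only care needed is to justify Fubini and the polynomial expansion, which boundedness handles, and to track the factor $\frac12$ against the two orientations, which symmetry of $K$ settles.
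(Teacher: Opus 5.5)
Your argument is correct: expanding the product over edges makes $t(F,W+\varepsilon K)$ a polynomial in $\varepsilon$ whose linear coefficient is the sum of single-edge replacements. Fubini, together with the symmetry of $K$ and the relation $t_2(F_{b,a},W)(x,y)=t_2(F_{a,b},W)(y,x)$, then matches each edge term with the average of its two oriented contributions in \eqref{eq:gradient}. The paper gives no proof of its own and cites \cite[Lemma~7.5]{LS11}; your direct product-rule computation is the standard argument behind that result, and it correctly checks the factor $\frac12$ under this paper's normalization of $\nabla F(W)$.
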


For $K\in L^2([0,1]^2)$, not necessarily symmetric, let $T_K$ be
the \emph{Hilbert--Schmidt operator} on $L^2[0,1]$ defined by
\[
  (T_Kf)(x):=\int_0^1 K(x,y)f(y)\,dy.
\]
The map $K\mapsto T_K$ is injective and
$\|T_K\|_{\mathrm{HS}}=\|K\|_2$.  Conversely, every
Hilbert--Schmidt operator on $L^2[0,1]$ has a unique kernel in
$L^2([0,1]^2)$, which we call its Hilbert--Schmidt kernel.

For a bounded operator $S$ on $L^2[0,1]$, we write $\Ran S$ for
its range and $\rank S:=\dim\Ran S$.  If
$K^{\mathsf T}(x,y):=K(y,x)$, then
$T_{K^{\mathsf T}}=(T_K)^*$.  Every $T_K$ is compact, and it is
self-adjoint when $K$ is symmetric.  We will use the standard
spectral theorem for compact self-adjoint operators; in particular,
their nonzero eigenvalues are real and have finite multiplicity.
For $n\in\N$, we use
\[
 K^{\odot n}(x,y):=K(x,y)^n
\]
for the pointwise power; this is different from the operator-composition
power $(T_K)^n$.

For $f,g\in L^2[0,1]$, the rank-one operator $f\otimes g$ is
\[
 (f\otimes g)h:=\ip{h}{g}f.
\]
We write $\one$ for the constant function $\one(x)=1$.  If $V$ is a
closed subspace of $L^2[0,1]$, the orthogonal projections onto $V$ and
$V^\perp$ are denoted by $P_V$ and $Q_V$, respectively; $I$ denotes the
identity operator.

\section{Proof of \cref{thm:main}}\label{sec:proof}

In this section, we prove Theorem~\ref{thm:main}.  We first consider the easy direction.  
Suppose that $W$ is a $q$-step kernel with respect to a measurable partition $[0,1]=S_1\sqcup\cdots\sqcup S_q$.  For every two-labelled graph $H$, the function $t_2(H,W)$ is constant almost everywhere on each $S_i\times S_j$, since one may split the defining integral according to the parts containing the unlabelled vertices.  Consequently, every $\nabla F(W)$ belongs to the space of symmetric kernels that are constant on the rectangles $S_i\times S_j$.  
This space has dimension at most $q(q+1)/2$, and hence $\dim\cL(W)<\infty$.

It remains to prove the converse, which is the main part of this section.  
Assume that $\dim\cL(W)<\infty$.  Cycle gradients first
imply that $T_W$ has finite rank.  We then use book graphs and a compression argument to control the pointwise powers of the kernel of $T_W^2$.  
Combined with the spectral decomposition of $W$, this forces all spectral monomials to lie in a common finite-dimensional function space.  
It follows that the spectral coordinates have finite essential range, which yields a finite step partition for $W$.

We will use the following consequence of the cycle-gradient argument of Lovász and Szegedy~\cite[Example~7.4 and the proof of Proposition~7.7]{LS11}.  For completeness, we recall the short argument.

\begin{lemma}[Cycle gradients]\label{lem:cycles}
Let $W\in\cW$ and put $T=T_W$.  
For $m\ge2$, let $K_m$ be the
Hilbert--Schmidt kernel of $T^m$.  Then
\[
  \nabla C_{m+1}(W)=(m+1)K_m.
\]
Consequently, if $\cL(W)$ is finite-dimensional, then $T$ has finite rank.
\end{lemma}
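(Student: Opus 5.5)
We compute the gradient of the cycle $C_{m+1}$ directly from \eqref{eq:gradient} and then use the finite-dimensionality of $\cL(W)$ together with the spectral theorem.

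\textbf{The gradient identity.} Label the vertices of $C_{m+1}$ cyclically by $0,1,\dots,m$. For every oriented edge $(a,b)\in\vec E(C_{m+1})$, deleting $\{a,b\}$ leaves a path with $m$ edges whose endpoints are $a$ and $b$, labelled $1$ and $2$. By \eqref{eq:t2}, integrating out the $m-1$ interior vertices gives
\[
 t_2\bigl((C_{m+1})_{a,b},W\bigr)(x,y)=\int W(x,z_1)W(z_1,z_2)\cdots W(z_{m-1},y)\,dz_1\cdots dz_{m-1}.
\]
The right-hand side is exactly the Hilbert--Schmidt kernel $K_m$ of $T^m$, by iterating the composition rule for integral operators (Fubini applies since $W$ is bounded). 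There are $2(m+1)$ oriented edges, each contributing $K_m$. The factor $\tfrac12$ in \eqref{eq:gradient} then gives $\nabla C_{m+1}(W)=(m+1)K_m$. Symmetry of $W$ makes the two orientations agree, although we do not even need this.

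\textbf{Finite rank.} Suppose $\dim\cL(W)=d<\infty$. The kernels $K_2,\dots,K_{d+2}$ all lie in $\cL(W)$, so they are linearly dependent. Since $K\mapsto T_K$ is injective and linear, there are coefficients $c_2,\dots,c_{d+2}$, not all zero, with
\[
 \sum_{m=2}^{d+2}c_mT^m=0 .
\]
Now apply the spectral theorem for the compact self-adjoint operator $T$. On each eigenvector with eigenvalue $\lambda\neq0$, this relation gives $p(\lambda)=0$, where $p(t)=\sum_{m}c_mt^m$ is a nonzero polynomial. Hence $T$ has only finitely many nonzero eigenvalues. Each nonzero eigenvalue has finite multiplicity, and
\[
 \Ran T\subseteq \overline{\Span}\{\text{eigenvectors with nonzero eigenvalue}\}.
\]
This closed span is finite-dimensional, so $\rank T<\infty$.

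\textbf{Main obstacle.} The only delicate point is the bookkeeping in the gradient identity: checking that each deleted edge leaves a path of exactly $m$ edges, and that the normalization $\tfrac12\cdot 2(m+1)$ yields $m+1$. The finite-rank step is routine once one works with $T^m$ for $m\ge2$ only. This restriction is harmless, because the finitely many nonzero eigenvalues are exactly the nonzero roots of $p$.
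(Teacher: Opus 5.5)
Your proof is correct and follows the paper's argument: each of the $2(m+1)$ oriented edges leaves a two-labelled $m$-edge path with density $K_m$, and finite-dimensionality of $\cL(W)$ gives a nonzero polynomial $p$ with $p(T)=0$, whose finitely many nonzero roots (each eigenvalue having finite multiplicity) force $T$ to have finite rank. One small slip in your closing remark: the nonzero eigenvalues of $T$ are \emph{among} the nonzero roots of $p$, not necessarily exactly those roots; that weaker statement is all your argument uses.
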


\begin{proof}
Deleting an edge from $C_{m+1}$ leaves a two-labelled path of length $m$, whose density is the kernel $K_m$.  Since all $m+1$ edges give the same contribution, the displayed identity follows.

If $\cL(W)$ is finite-dimensional, then $T^2,T^3,\ldots$ are
linearly dependent, so $p(T)=0$ for some nonzero polynomial $p$.
Hence every nonzero eigenvalue of the compact self-adjoint
operator $T$ is a nonzero root of $p$.  There are only finitely
many such roots, each with finite multiplicity, and thus $T$ has
finite rank.
\end{proof}

From now on, assume that $\dim\cL(W)<\infty$.  By
Lemma~\ref{lem:cycles}, the operator $T:=T_W$ has finite rank.
Set $V:=\Ran T$, and let $Q$ be the orthogonal projection onto
$V^\perp$.  Let $C$ denote the kernel of $T^2$, so that
$$C(x,y):=\int_0^1 W(x,z)W(z,y)\,dz.$$

For $n\ge1$, let $B_n$ denote the \emph{$n$-book}, consisting of
$n$ triangles sharing a common edge.  More precisely, its vertices
are $a,b,z_1,\ldots,z_n$, with edges $ab$, $az_i$, and $bz_i$ for
$i\in[n]$.  In particular, $B_1=C_3$.

The following compression identity is the key point of the proof.

\begin{lemma}[Book compression]\label{lem:book}
For every $n\ge1$,
\[
  QT_{\nabla B_n(W)}Q
  =
  QT_{C^{\odot n}}Q.
\]
Consequently, the operators
$A_n:=QT_{C^{\odot n}}Q$, $n\ge1$, span a finite-dimensional
vector space.
\end{lemma}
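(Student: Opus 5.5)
We compute $\nabla B_n(W)$ explicitly from \eqref{eq:gradient}, by sorting the edges of $B_n$ into two types: the spine $ab$, and the $2n$ page edges $az_i,bz_i$. The claim is that every contribution except the spine one is killed by compression with $Q$, and that the spine contribution is exactly $C^{\odot n}$.

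\emph{Spine contributions.} Deleting $ab$ and labelling $a,b$ as $1,2$ (in either orientation) gives
\[
t_2(B_{n,a,b},W)(x,y)=\prod_{i=1}^n\int_0^1 W(x,z_i)W(z_i,y)\,dz_i=C(x,y)^n .
\]
The factorisation holds because the pages are disjoint given $x,y$. The two orientations contribute $\tfrac12\cdot2\cdot C^{\odot n}=C^{\odot n}$.

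\emph{Page contributions.} Delete $az_i$, with $a$ labelled $1$ and $z_i$ labelled $2$, so that $x$ sits at $a$ and $y$ at $z_i$. The remaining graph contains the edge $ab$ with $a$ labelled, so
\[
t_2(\cdot)(x,y)=\int_0^1 W(x,u)\,\Phi(x,y,u)\,du ,
\]
where $u$ is the position of $b$ and $\Phi(x,y,u)=W(u,y)\,C(x,u)^{n-1}$ is bounded. The key observation is that $x\mapsto t_2(x,y)$ is obtained by applying $W(x,\cdot)$ to something, but the integrand also depends on $x$ through $C(x,u)^{n-1}$. So the contribution is not literally $T$ composed with something, and we need a finer argument.

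Use the finite-rank structure instead. By \cref{lem:cycles} write $W(x,u)=\sum_{k}\lambda_k\phi_k(x)\phi_k(u)$ with finitely many eigenfunctions $\phi_k\in V$. Then
\[
t_2(x,y)=\sum_k\lambda_k\phi_k(x)\,g_k(x,y),\qquad g_k(x,y)=\int\phi_k(u)W(u,y)C(x,u)^{n-1}\,du .
\]
This is still not obviously in $V\otimes L^2$, because of the factor $g_k(x,y)$. So we need a variable on which the page kernel is "linear through $W$ only". Choose the vertex adjacent to the labelled vertex only via the deleted-edge complement.

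Reconsider with $z_i$ labelled $1$ (at $x$) and $a$ labelled $2$ (at $y$). The vertex $z_i$ is now incident in the remaining graph only to the edge $z_ib$. Hence
\[
t_2(x,y)=\int_0^1 W(x,u)\,\Psi(y,u)\,du ,
\]
with $\Psi$ bounded and independent of $x$. As an operator this is $T\circ T_\Psi$, whose range lies in $V$, so $Q\,T_{t_2}=0$. For the opposite orientation ($a$ at $x$, $z_i$ at $y$) the kernel is the transpose, so $T_{t_2}Q=(QT_{t_2^{\mathsf T}})^*=0$. Each page edge thus contributes two oriented terms, one annihilated by $Q$ on the left and the other on the right, and both vanish under $Q\,\cdot\,Q$. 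The edges $bz_i$ are handled symmetrically. This gives
\[
QT_{\nabla B_n(W)}Q=QT_{C^{\odot n}}Q .
\]

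\emph{Consequence.} The map $K\mapsto QT_KQ$ is linear, and $\nabla B_n(W)\in\cL(W)$ for every $n$. Hence every $A_n$ lies in the image of the finite-dimensional space $\cL(W)$ under this map, so the span of the $A_n$ is finite-dimensional.

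The only delicate point is choosing the right labelling orientation for page edges and verifying boundedness of $\Psi$, so that $T_\Psi$ is Hilbert--Schmidt and $\Ran(TT_\Psi)\subseteq V$. Both are routine since $W$ is bounded.
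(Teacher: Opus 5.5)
Your proposal is correct and follows essentially the same route as the paper. The spine edge gives $C^{\odot n}$, and each oriented page-edge term (the paper's $R_n$ or $R_n^{\mathsf T}$) has range in $V=\Ran T$ on one side, so $Q$ kills it there; the opposite orientation is then handled by taking adjoints.

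Two small remarks. The kernel $\int_0^1 W(x,u)\Psi(y,u)\,du$ is that of $T\circ T_{\Psi^{\mathsf T}}$, not $T\circ T_\Psi$; this does not affect the range argument. You can also drop the abandoned eigenfunction-expansion detour.
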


\begin{proof}
Fix $n\ge1$.  Since $W$ is bounded, the kernel $C$ is bounded and
symmetric.  Define
\[
  R_n(x,z)
  :=
  \int_0^1
  W(x,y)W(z,y)C(x,y)^{n-1}\,dy,
\]
where, when $n=1$, we interpret $C(x,y)^{n-1}$ as $1$.

We first compute the gradient of $B_n$.  Recall that
$\nabla B_n(W)$ is obtained by deleting each edge of $B_n$ in turn,
labelling its two endpoints, and summing the corresponding
two-labelled densities over the two orientations.
Consider first the common edge $\{a,b\}$.  Fixing $a=x$ and $b=y$,
and deleting $\{a,b\}$, each page contributes
\[
  \int_0^1 W(x,z)W(z,y)\,dz=C(x,y).
\]
Since the $n$ page vertices are integrated independently, the
orientation $(a,b)$ contributes $C(x,y)^n=C^{\odot n}(x,y)$.
The reverse orientation gives the same kernel, since $C$ is
symmetric.  Hence, after the factor $1/2$ in the definition of
the gradient, the common edge contributes one copy of
$C^{\odot n}$.

We next consider the two noncommon edges on a fixed page, say the
page with vertex $z_i$.  Delete the edge $\{a,z_i\}$ and label
$a=x$ and $z_i=z$.  If $b=y$, then the edge $\{a,b\}$ contributes
$W(x,y)$, the remaining edge $\{b,z_i\}$ contributes $W(z,y)$,
and each of the other $n-1$ pages contributes $C(x,y)$.  Thus the
orientation $(a,z_i)$ contributes
\[
  \int_0^1
  W(x,y)W(z,y)C(x,y)^{n-1}\,dy
  =
  R_n(x,z).
\]
The reverse orientation contributes $R_n^{\mathsf T}(x,z)$.
The same two kernels arise from the two orientations of
$\{b,z_i\}$.  Consequently, after the factor $1/2$ in the
definition of $\nabla B_n(W)$, the two noncommon edges on this
page contribute $R_n+R_n^{\mathsf T}$.  Summing over the $n$
pages gives
\[
  \nabla B_n(W)
  =
  C^{\odot n}
  +
  n\bigl(R_n+R_n^{\mathsf T}\bigr).
  \tag{3.1}\label{eq:book-gradient}
\]

We now show that the last two terms disappear after compression
by $Q$.  For almost every $x$, define
$h_{n,x}(y):=W(x,y)C(x,y)^{n-1}$.  Since both $W$ and $C$ are
bounded, $h_{n,x}\in L^2[0,1]$.  By the definition of $T$,
\[
  (Th_{n,x})(z)
  =
  \int_0^1
  W(z,y)W(x,y)C(x,y)^{n-1}\,dy
  =
  R_n(x,z).
\]
Hence $R_n(x,\cdot)\in\Ran T=V$ for almost every $x$.

Let $f\in L^2[0,1]$.  Since $Qf\in V^\perp$, we obtain, for almost
every $x$,
\[
  (T_{R_n}Qf)(x)
  =
  \int_0^1 R_n(x,y)(Qf)(y)\,dy
  =
  \ip{R_n(x,\cdot)}{Qf}
  =
  0.
\]
Thus $T_{R_n}Q=0$.  Since $Q$ is self-adjoint and
$T_{R_n^{\mathsf T}}=T_{R_n}^*$, taking adjoints gives
\[
  QT_{R_n^{\mathsf T}}=0.
\]
It follows that
$QT_{R_n}Q=QT_{R_n^{\mathsf T}}Q=0$.

Applying the map $K\mapsto T_K$ to
\eqref{eq:book-gradient} and compressing on both sides by $Q$
therefore yields
\[
  QT_{\nabla B_n(W)}Q
  =
  QT_{C^{\odot n}}Q,
\]
which proves the claimed compression identity.

Finally, put $A_n:=QT_{C^{\odot n}}Q$.  By the identity just proved,
$A_n=QT_{\nabla B_n(W)}Q$ for every $n\ge1$.  Since
$\nabla B_n(W)\in\cL(W)$ and $\cL(W)$ is finite-dimensional, while
the map $K\mapsto QT_KQ$ is linear, the operators
$A_1,A_2,\ldots$ span a finite-dimensional vector space.
\end{proof}

We next pass from the compressed powers of $C$ to the spectral
coordinates of $W$.  If $T=0$, then $W=0$ almost everywhere by
the injectivity of the map $K\mapsto T_K$, and there is nothing
to prove.  We may therefore assume that $T$ has nonzero rank.

We use the following standard finite-rank form of the spectral theorem; see~\cite[Section~7.5]{Lovasz12}. 

\begin{lemma}[Finite-rank spectral decomposition]
\label{lem:spectral}
Let $W\in\cW$ and suppose that $T_W$ has finite nonzero rank $r$.
Then there are nonzero real eigenvalues
$\lambda_1,\ldots,\lambda_r$ and bounded real orthonormal
eigenfunctions $e_1,\ldots,e_r$ such that
\[
  W(x,y)
  =
  \sum_{j=1}^r \lambda_j e_j(x)e_j(y)
\]
almost everywhere.  Moreover,
$\Ran T_W=\Span\{e_1,\ldots,e_r\}$.
\end{lemma}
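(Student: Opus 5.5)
The plan is to apply the spectral theorem for compact self-adjoint operators to $T=T_W$ and then verify boundedness of the eigenfunctions separately. Since $W$ is bounded and symmetric, $T$ is compact and self-adjoint on $L^2[0,1]$. Its rank is finite and equal to $r$, so it has exactly $r$ nonzero eigenvalues counted with multiplicity. Call them $\lambda_1,\ldots,\lambda_r$; they are real. Choose an orthonormal basis $e_1,\ldots,e_r$ of real eigenfunctions for the eigenspaces of these eigenvalues. Real eigenfunctions are available because $T$ maps real functions to real functions, so each eigenspace is spanned by the real and imaginary parts of its elements. The spectral theorem then gives
\[
T=\sum_{j=1}^r\lambda_j\, e_j\otimes e_j,
\]
because $T$ vanishes on $(\ker T)=(\Span\{e_j\})^\perp$. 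The range statement follows: $\Ran T\subseteq\Span\{e_1,\ldots,e_r\}$ by the displayed formula, and conversely $Te_j=\lambda_j e_j$ with $\lambda_j\neq0$.

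Next I recover the pointwise identity for $W$. The operator $\sum_j\lambda_j e_j\otimes e_j$ is the Hilbert--Schmidt operator with kernel $\sum_j\lambda_j e_j(x)e_j(y)$. The map $K\mapsto T_K$ is injective, and both kernels lie in $L^2([0,1]^2)$ and induce the same operator. Hence $W(x,y)=\sum_j\lambda_je_j(x)e_j(y)$ almost everywhere.

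Finally I check that each $e_j$ is bounded; this is the only point requiring care beyond citing the spectral theorem. Write
\[
e_j=\lambda_j^{-1}Te_j,\qquad\text{so}\qquad e_j(x)=\lambda_j^{-1}\int_0^1W(x,y)e_j(y)\,dy .
\]
By Cauchy--Schwarz and $|W|\le\|W\|_\infty$ almost everywhere,
\[
|e_j(x)|\le|\lambda_j|^{-1}\|W\|_\infty\|e_j\|_2=|\lambda_j|^{-1}\|W\|_\infty
\]
for almost every $x$. Fubini justifies that $W(x,\cdot)$ is measurable and bounded for almost every $x$. Redefining $e_j$ on a null set therefore makes it a bounded real function, which completes the lemma.
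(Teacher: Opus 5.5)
Your proposal is correct and matches the paper's treatment. The paper cites the standard spectral theorem for compact self-adjoint operators for the decomposition and the range identity, and obtains boundedness exactly as you do, via $|\lambda_j e_j(x)|=|Te_j(x)|\le\|W(x,\cdot)\|_2\|e_j\|_2\le\|W\|_\infty$ for almost every $x$; your detour through real and imaginary parts is harmless but unnecessary, since the paper works over real Hilbert spaces throughout.
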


Fix the representation in Lemma~\ref{lem:spectral} and put
$u_j:=\lambda_j e_j$ for $j\in[r]$.  Since
$u_j = T e_j$, for almost every $x$, we have
\[
        |u_j(x)|
        =|Te_j(x)|
        \leq \|W(x,\cdot)\|_2\|e_j\|_2
        \leq \|W\|_\infty.
\]
In particular, each $u_j$ (and hence each $e_j$) is essentially bounded.
Moreover, since $C$ is the
Hilbert--Schmidt kernel of $T^2$,
\[
  C(x,y)
  =
  \sum_{j=1}^r u_j(x)u_j(y)
\]
almost everywhere.

For $\alpha=(\alpha_1,\ldots,\alpha_r)\in\N_0^r$, write
$|\alpha|:=\alpha_1+\cdots+\alpha_r$ and
$u^\alpha:=u_1^{\alpha_1}\cdots u_r^{\alpha_r}$.  In particular,
$u^{(0,\ldots,0)}=\one$.  Since the functions $u_j$ are bounded,
all these monomials belong to $L^2[0,1]$.

The next lemma is the main consequence of the book-compression
identity.

\begin{lemma}[Monomial confinement]\label{lem:monomials}
There is a finite-dimensional subspace
$\mathcal U\subseteq L^2[0,1]$ such that $\one\in\mathcal U$ and
$u^\alpha\in\mathcal U$ for every $\alpha\in\N_0^r$.
\end{lemma}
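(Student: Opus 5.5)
The plan is to expand the pointwise powers $C^{\odot n}$ in terms of the spectral coordinates $u_j$. The compressed operators $A_n$ of Lemma~\ref{lem:book} then become positive combinations of rank-one projections onto the compressed monomials $Qu^\alpha$. Finite-dimensionality of $\Span\{A_n\}$ will then confine all $Qu^\alpha$ to a finite-dimensional space, and the uncompressed parts $P_V u^\alpha$ lie in the finite-dimensional space $V$ anyway.

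\emph{Step 1: expand the powers.} Using $C(x,y)=\sum_{j=1}^r u_j(x)u_j(y)$ almost everywhere and the multinomial theorem,
\[
  C^{\odot n}(x,y)=\sum_{|\alpha|=n}\binom{n}{\alpha}u^\alpha(x)u^\alpha(y).
\]
Since all $u^\alpha$ are bounded, this gives
\[
  T_{C^{\odot n}}=\sum_{|\alpha|=n}\binom{n}{\alpha}\,u^\alpha\otimes u^\alpha .
\]
Compressing by the self-adjoint projection $Q$ yields
\[
  A_n=\sum_{|\alpha|=n}\binom{n}{\alpha}\,(Qu^\alpha)\otimes(Qu^\alpha).
\]
This is a finite sum, so $A_n$ has finite rank.

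\emph{Step 2: identify the range of $A_n$.} We claim $\Ran A_n=\Span\{Qu^\alpha:|\alpha|=n\}=:F_n$. The inclusion $\Ran A_n\subseteq F_n$ is clear. For the reverse inclusion, observe that
\[
  \ip{A_nf}{f}=\sum_{|\alpha|=n}\binom{n}{\alpha}\ip{f}{Qu^\alpha}^2
\]
with strictly positive coefficients. Hence $\ker A_n=F_n^\perp$. Since $A_n$ is self-adjoint with finite-dimensional range, $\Ran A_n=(\ker A_n)^\perp=F_n$.

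\emph{Step 3: use finite-dimensionality.} By Lemma~\ref{lem:book}, there are indices $n_1,\dots,n_k$ such that every $A_n$ is a linear combination of $A_{n_1},\dots,A_{n_k}$. The range of a linear combination of operators lies in the sum of their ranges, so
\[
  F_n\subseteq F:=F_{n_1}+\cdots+F_{n_k}\qquad\text{for all } n\ge1,
\]
and $F$ is finite-dimensional. Thus $Qu^\alpha\in F$ for every $\alpha$ with $|\alpha|\ge1$. Writing $u^\alpha=(I-Q)u^\alpha+Qu^\alpha$ with $(I-Q)u^\alpha\in V$, we obtain $u^\alpha\in V+F$.

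Set
\[
  \mathcal U:=V+F+\Span\{\one\}.
\]
This space is finite-dimensional because $V=\Ran T$ has finite rank by Lemma~\ref{lem:cycles}. It contains $\one=u^{(0,\ldots,0)}$ and every $u^\alpha$, which proves the lemma.

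The only delicate point is Step 2, namely that the range of $A_n$ is all of $F_n$ and not a proper subspace. This rests on the positivity of the multinomial coefficients, which prevents cancellation among the rank-one terms.
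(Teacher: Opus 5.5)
Your proposal is correct and matches the paper's proof step for step: the multinomial expansion of $C^{\odot n}$, the positivity argument giving $\Ran A_n=\Span\{Qu^\alpha:|\alpha|=n\}$, the confinement of all these ranges to the sum of finitely many of them via \cref{lem:book}, and finally $\mathcal U=\Span\{\one\}+V+F$. The only difference is cosmetic: the paper treats the degenerate case $\Span\{A_n:n\ge1\}=\{0\}$ separately, and in your version that case is covered by taking $F=\{0\}$.
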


\begin{proof}
For $n\ge1$, let
$H_n:=\Span\{u^\alpha:|\alpha|=n\}$.  Since
$C(x,y)=\sum_{j=1}^r u_j(x)u_j(y)$, the multinomial theorem gives
\[
  C^{\odot n}(x,y)
  =
  \sum_{|\alpha|=n}
  \binom{n}{\alpha}
  u^\alpha(x)u^\alpha(y),
\]
where
$\binom{n}{\alpha}:=
n!/(\alpha_1!\cdots\alpha_r!)$.
Therefore
\[
  T_{C^{\odot n}}
  =
  \sum_{|\alpha|=n}
  \binom{n}{\alpha}
  u^\alpha\otimes u^\alpha.
\]
Compressing by $Q$ on both sides yields
\[
  A_n
  =
  \sum_{|\alpha|=n}
  \binom{n}{\alpha}
  (Qu^\alpha)\otimes(Qu^\alpha).
  \tag{3.2}\label{eq:gram-expansion}
\]
Indeed, for every $f,g\in L^2[0,1]$,
$Q(f\otimes g)Q=(Qf)\otimes(Qg)$.

Set
$S_n:=\Span\{Qu^\alpha:|\alpha|=n\}=Q(H_n)$.
Since all coefficients in \eqref{eq:gram-expansion} are positive,
for every $f\in L^2[0,1]$ we have
\[
  \ip{A_nf}{f}
  =
  \sum_{|\alpha|=n}
  \binom{n}{\alpha}
  \bigl|\ip{f}{Qu^\alpha}\bigr|^2.
\]
It follows that $A_nf=0$ if and only if
$f$ is orthogonal to every $Qu^\alpha$ with $|\alpha|=n$.
Hence
$\ker A_n=S_n^\perp$.

The operator $A_n$ is self-adjoint and has finite rank, since
\eqref{eq:gram-expansion} is a finite sum of rank-one operators.
Therefore its range is closed, and
\[
  \Ran A_n
  =
  (\ker A_n)^\perp
  =
  S_n
  =
  Q(H_n).
  \tag{3.3}\label{eq:range-An}
\]

By Lemma~\ref{lem:book}, the space
$\mathcal A:=\Span\{A_n:n\ge1\}$ is finite-dimensional. 
If $\mathcal A=\{0\}$, set $M:=\{0\}$. 
Otherwise, choose a basis $A_{n_1},\ldots,A_{n_s}$ of $\mathcal A$ from the
sequence $(A_n)_{n\ge1}$ and set
\[
  M
  :=
  \Ran A_{n_1}+\cdots+\Ran A_{n_s}.
\]
In either case, the space $M$ is finite-dimensional and $\Ran A_n\subseteq M$ for every $n \ge 1$.  
Together with \eqref{eq:range-An}, this gives $Q(H_n)\subseteq M$ for every $n\ge1$.

Let $P$ be the orthogonal projection onto $V$.  Since $P+Q=I$,
for every $h\in H_n$ we have $h=Ph+Qh$, where $Ph\in V$ and
$Qh\in M$.  Thus
$H_n\subseteq V+M$ for every $n\ge1$.

Finally, set
$\mathcal U:=\Span\{\one\}+V+M$.  Both $V$ and $M$ are
finite-dimensional, so $\mathcal U$ is finite-dimensional.
It contains $\one$ and every monomial $u^\alpha$ with
$|\alpha|\ge1$, and hence every $u^\alpha$ with
$\alpha\in\N_0^r$.
\end{proof}

We are now ready to complete the proof of
Theorem~\ref{thm:main}.  By Lemma~\ref{lem:monomials}, all
monomials in the spectral coordinates lie in a common
finite-dimensional space.  This immediately forces each spectral
coordinate to take only finitely many values almost everywhere.

\begin{proof}[Proof of Theorem~\ref{thm:main}]
The forward direction was proved at the beginning of this section,
so it remains to assume that $\dim\cL(W)<\infty$ and show that $W$
is a step kernel.

Put $T:=T_W$.  By Lemma~\ref{lem:cycles}, the operator $T$ has
finite rank.  If $T=0$, then the injectivity of the map
$K\mapsto T_K$ gives $W=0$ almost everywhere, and hence $W$ is a
step kernel.

We may therefore assume that $T$ has nonzero rank.  Fix the
spectral representation introduced above and write
$u_j=\lambda_j e_j$ for $j\in[r]$.  By
Lemma~\ref{lem:monomials}, there is a finite-dimensional subspace
$\mathcal U\subseteq L^2[0,1]$ containing $\one$ and every
monomial $u^\alpha$.

Fix $j\in[r]$.  In particular, all the functions
$\one,u_j,u_j^2,\ldots$ belong to $\mathcal U$.  Since
$\mathcal U$ is finite-dimensional, these functions are linearly
dependent.  Thus there is a nonzero polynomial $p_j\in\R[X]$ such
that $p_j(u_j)=0$ in $L^2[0,1]$, and hence
$p_j(u_j(x))=0$ for almost every $x\in[0,1]$.  Since a nonzero
real polynomial has only finitely many real roots, there are a finite set $R_j\subseteq\R$ and a conull measurable set $X_j\subseteq[0,1]$ such that $u_j(X_j)\subseteq R_j$. Set $X_0:=\bigcap_{j=1}^rX_j$.
Then $X_0$ is conull, and the map $x \mapsto(u_1(x),\ldots,u_r(x))$ takes at most $\prod_{j=1}^r|R_j|$ values on $X_0$.
Let $S_1,\ldots,S_q$ be its nonempty level sets, and absorb the null set $[0,1]\setminus X_0$ into one of them.  These sets form a finite measurable partition of $[0,1]$.

Finally, since
\[
  W(x,y)
  =
  \sum_{j=1}^r
  \frac{u_j(x)u_j(y)}{\lambda_j}
\]
for almost every $(x, y) \in [0, 1]^2$. 
On the conull set on which this identity holds, its right-hand side depends only on the level sets containing $x$ and $y$.  Hence $W$ is constant almost everywhere on each $S_a\times S_b$, and therefore $W$ is a step kernel.
\end{proof}

\section*{Declaration of AI use.}
ChatGPT-5.6 sol, developed by OpenAI, was used during the early stages of this project and in the
preparation of the manuscript. In particular, the core idea underlying the proof of the main theorem was proposed by ChatGPT.
ChatGPT was also used to assist with the language,
organization, and presentation of the manuscript.  The authors subsequently developed and independently verified all mathematical
arguments and take full responsibility for all statements, proofs, citations, and conclusions presented in this paper.

\bibliographystyle{abbrv}
\bibliography{reference}

@article {BCL10,
    AUTHOR = {Borgs, Christian and Chayes, Jennifer and Lov\'asz,
              L\'aszl\'o},
     TITLE = {Moments of two-variable functions and the uniqueness of graph
              limits},
   JOURNAL = {Geom. Funct. Anal.},
  FJOURNAL = {Geometric and Functional Analysis},
    VOLUME = {19},
      YEAR = {2010},
    NUMBER = {6},
     PAGES = {1597--1619},
      ISSN = {1016-443X,1420-8970},
   MRCLASS = {05C60 (05C80 28D15 60C05)},
  MRNUMBER = {2594615},
MRREVIEWER = {Christian\ Lavault},
       DOI = {10.1007/s00039-010-0044-0},
       URL = {https://doi.org/10.1007/s00039-010-0044-0},
}

@article {CGW89,
    AUTHOR = {Chung, F. R. K. and Graham, R. L. and Wilson, R. M.},
     TITLE = {Quasi-random graphs},
   JOURNAL = {Combinatorica},
  FJOURNAL = {Combinatorica. An International Journal on Combinatorics and
              the Theory of Computing},
    VOLUME = {9},
      YEAR = {1989},
    NUMBER = {4},
     PAGES = {345--362},
      ISSN = {0209-9683},
   MRCLASS = {05C80},
  MRNUMBER = {1054011},
MRREVIEWER = {Zbigniew\ Palka},
       DOI = {10.1007/BF02125347},
       URL = {https://doi.org/10.1007/BF02125347},
}

@article {CKKN18,
    AUTHOR = {Cooper, Jacob W. and Kaiser, Tom\'a\v{s} and Kr\'{a}l', Daniel
              and Noel, Jonathan A.},
     TITLE = {Weak regularity and finitely forcible graph limits},
   JOURNAL = {Trans. Amer. Math. Soc.},
  FJOURNAL = {Transactions of the American Mathematical Society},
    VOLUME = {370},
      YEAR = {2018},
    NUMBER = {6},
     PAGES = {3833--3864},
      ISSN = {0002-9947,1088-6850},
   MRCLASS = {05C35 (05C80)},
  MRNUMBER = {3811511},
MRREVIEWER = {Jonathan\ Cutler},
       DOI = {10.1090/tran/7066},
       URL = {https://doi.org/10.1090/tran/7066},
}

@article {CKM18,
    AUTHOR = {Cooper, Jacob W. and Kr\'{a}l', Daniel and Martins, Ta\'isa
              L.},
     TITLE = {Finitely forcible graph limits are universal},
   JOURNAL = {Adv. Math.},
  FJOURNAL = {Advances in Mathematics},
    VOLUME = {340},
      YEAR = {2018},
     PAGES = {819--854},
      ISSN = {0001-8708,1090-2082},
   MRCLASS = {05C35},
  MRNUMBER = {3886181},
       DOI = {10.1016/j.aim.2018.10.019},
       URL = {https://doi.org/10.1016/j.aim.2018.10.019},
}

@article {GKK19,
    AUTHOR = {Glebov, Roman and Klimo\v{s}ov\'a, Tereza and Kr\'{a}l', Daniel},
     TITLE = {Infinite-dimensional finitely forcible graphon},
   JOURNAL = {Proc. Lond. Math. Soc. (3)},
  FJOURNAL = {Proceedings of the London Mathematical Society. Third Series},
    VOLUME = {118},
      YEAR = {2019},
    NUMBER = {4},
     PAGES = {826--856},
      ISSN = {0024-6115,1460-244X},
   MRCLASS = {05C35 (05C80)},
  MRNUMBER = {3938713},
MRREVIEWER = {Jan\ Hladk\'y},
       DOI = {10.1112/plms.12203},
       URL = {https://doi.org/10.1112/plms.12203},
}

@article {GKV19,
    AUTHOR = {Glebov, Roman and Kr\'{a}l', Daniel and Volec, Jan},
     TITLE = {Compactness and finite forcibility of graphons},
   JOURNAL = {J. Eur. Math. Soc. (JEMS)},
  FJOURNAL = {Journal of the European Mathematical Society (JEMS)},
    VOLUME = {21},
      YEAR = {2019},
    NUMBER = {10},
     PAGES = {3199--3223},
      ISSN = {1435-9855,1435-9863},
   MRCLASS = {05C35 (05C82)},
  MRNUMBER = {3994104},
MRREVIEWER = {Jan\ Hladk\'y},
       DOI = {10.4171/JEMS/901},
       URL = {https://doi.org/10.4171/JEMS/901},
}

@article {GKP24,
    AUTHOR = {Grzesik, Andrzej and Kr\'{a}l', Daniel and Pikhurko, Oleg},
     TITLE = {Forcing generalised quasirandom graphs efficiently},
   JOURNAL = {Combin. Probab. Comput.},
  FJOURNAL = {Combinatorics, Probability and Computing},
    VOLUME = {33},
      YEAR = {2024},
    NUMBER = {1},
     PAGES = {16--31},
      ISSN = {0963-5483,1469-2163},
   MRCLASS = {05C40},
  MRNUMBER = {4680487},
       DOI = {10.1017/s0963548323000263},
       URL = {https://doi.org/10.1017/s0963548323000263},
}

@article {KLNS20,
    AUTHOR = {Kr\'{a}l', Daniel and Lov\'asz, L\'aszl\'o{} M. and Noel,
              Jonathan A. and Sosnovec, Jakub},
     TITLE = {Finitely forcible graphons with an almost arbitrary structure},
   JOURNAL = {Discrete Anal.},
  FJOURNAL = {Discrete Analysis},
      YEAR = {2020},
     PAGES = {Paper No. 9, 36},
      ISSN = {2397-3129},
   MRCLASS = {05C35 (05C80)},
  MRNUMBER = {4132060},
MRREVIEWER = {Nikolaos\ Fountoulakis},
       DOI = {10.19086/da},
       URL = {https://doi.org/10.19086/da},
}

@book {Lovasz12,
    AUTHOR = {Lov\'asz, L\'aszl\'o},
     TITLE = {Large networks and graph limits},
    SERIES = {American Mathematical Society Colloquium Publications},
    VOLUME = {60},
 PUBLISHER = {American Mathematical Society, Providence, RI},
      YEAR = {2012},
     PAGES = {xiv+475},
      ISBN = {978-0-8218-9085-1},
   MRCLASS = {05-02 (05C60 05C80 05C82 05D40)},
  MRNUMBER = {3012035},
MRREVIEWER = {Anant\ P.\ Godbole},
       DOI = {10.1090/coll/060},
       URL = {https://doi.org/10.1090/coll/060},
}

@article {LS06,
    AUTHOR = {Lov\'asz, L\'aszl\'o{} and Szegedy, Bal\'azs},
     TITLE = {Limits of dense graph sequences},
   JOURNAL = {J. Combin. Theory Ser. B},
  FJOURNAL = {Journal of Combinatorial Theory. Series B},
    VOLUME = {96},
      YEAR = {2006},
    NUMBER = {6},
     PAGES = {933--957},
      ISSN = {0095-8956,1096-0902},
   MRCLASS = {05C35 (05C50 05C80)},
  MRNUMBER = {2274085},
MRREVIEWER = {Yoshiharu\ Kohayakawa},
       DOI = {10.1016/j.jctb.2006.05.002},
       URL = {https://doi.org/10.1016/j.jctb.2006.05.002},
}

@article {LS11,
    AUTHOR = {Lov\'asz, L. and Szegedy, B.},
     TITLE = {Finitely forcible graphons},
   JOURNAL = {J. Combin. Theory Ser. B},
  FJOURNAL = {Journal of Combinatorial Theory. Series B},
    VOLUME = {101},
      YEAR = {2011},
    NUMBER = {5},
     PAGES = {269--301},
      ISSN = {0095-8956,1096-0902},
   MRCLASS = {05C35 (05D40)},
  MRNUMBER = {2802882},
MRREVIEWER = {Lyuben\ R.\ Mutafchiev},
       DOI = {10.1016/j.jctb.2011.03.005},
       URL = {https://doi.org/10.1016/j.jctb.2011.03.005},
}

@article {GGKK15,
    AUTHOR = {Glebov, Roman and Grzesik, Andrzej and Klimo\v{s}ov\'a, Tereza
              and Kr\'{a}l', Daniel},
     TITLE = {Finitely forcible graphons and permutons},
   JOURNAL = {J. Combin. Theory Ser. B},
  FJOURNAL = {Journal of Combinatorial Theory. Series B},
    VOLUME = {110},
      YEAR = {2015},
     PAGES = {112--135},
      ISSN = {0095-8956,1096-0902},
   MRCLASS = {05C80 (05A05)},
  MRNUMBER = {3279390},
MRREVIEWER = {Lyuben\ R.\ Mutafchiev},
       DOI = {10.1016/j.jctb.2014.07.007},
       URL = {https://doi.org/10.1016/j.jctb.2014.07.007},
}

@article {LS08,
    AUTHOR = {Lov\'asz, L\'aszl\'o{} and S\'os, Vera T.},
     TITLE = {Generalized quasirandom graphs},
   JOURNAL = {J. Combin. Theory Ser. B},
  FJOURNAL = {Journal of Combinatorial Theory. Series B},
    VOLUME = {98},
      YEAR = {2008},
    NUMBER = {1},
     PAGES = {146--163},
      ISSN = {0095-8956,1096-0902},
   MRCLASS = {05C80},
  MRNUMBER = {2368030},
MRREVIEWER = {Daniela\ K\"uhn},
       DOI = {10.1016/j.jctb.2007.06.005},
       URL = {https://doi.org/10.1016/j.jctb.2007.06.005},
}

\end{document}